\theoremstyle{plain}
\newtheorem{theorem}{Theorem}[section]
\newtheorem{proofth1}{Proof of Theorem}[section]
\newtheorem{definition}{Definition}[section]
\newtheorem{lemma}{Lemma}[section]
\newcommand{\FF}{\mathcal{F}}
\newcommand{\E}{\mathbb{E}}
\newcommand{\PP}{\mathbb{P}}
\begin{document}
	\title[Rate of convergence to stable law ]{
Rate of convergence to alpha stable law using Zolotarev distance : Technical report }

\maketitle{}

\centerline{Solym Mawaki MANOU-ABI $^{\rm 1,2}$}

\vspace{12pt}

\centerline{$^{\rm 1}$ CUFR  de Mayotte}
\centerline{D\'epartement Sciences Technologies, Dembeni, France }
\centerline{solym.manou-abi@univ-mayotte.fr }

\vspace{12pt}

\centerline{$^{\rm 2}$ IMAG, Univ Montpellier}
\centerline{ CNRS, Montpellier, France}
\centerline{solym-mawaki.manou-abi@umontpellier.fr}

\renewcommand{\thefootnote}{}
\renewcommand{\thefootnote}{\arabic{footnote}}
\setcounter{footnote}{0}
\date{}
\begin{abstract}
This paper considers the question of the rate of convergence to $\alpha$-stable laws, using arguments based on the Zolotarev distance to prove bounds.  We provide a rate of convergence to 
$\alpha$-stable random variable where $ 1 <\alpha < 2$, in the generalized CLT, that
is, for the partial sums of independent identically distributed  random variables which are not assumed to be square integrable. This work is a technical report based on the Zolotarev paper in \cite{zolotarev}. 
\end{abstract}

\vspace{12pt}

{\bf AMS Subject Classification} : 60F05; 60G52; 60E07.

{\bf Key words and phrases}: CLT, Probability metric, Rate of convergence, Stable law.

\section{Introduction}
\label{intro}
When studying limit theorems in probability theory it is important to try to assess the rates at which these converges. It is known that taking into account probability metrics in the space of random variables, allow to approximate partial sums for sequences of random variables. It is also known that stable laws are the only limits in the study of the limit law for such partial sums properly normalized. However, the lack of explicit formulas for their density functions except some stable distributions (Gaussian, Cauchy, L\'evy) significantly limits their use in practice. A random variable $\vartheta$ has a stable distribution if there exists some coefficients $C_{n}>0$ and $D_{n}\in \mathbb{R}$ such that:
\begin{equation*}
  \vartheta_{1}+\ldots +\vartheta_{n} \overset{d}{=} C_{n}\vartheta+D_{n}  \quad \forall n\geq 1,
  \end{equation*}
 where  $\vartheta_{1},\ldots, \vartheta_{n}$ are i.i.d copies of $\vartheta$.
In particular such random variables are infinitely divisible. It is shown that $C_{n}=\,n^{1/\alpha}$ with $\alpha \in (0,2]$ (see for instance \cite{feller}, Section VI.1). The coefficient
 $\alpha$ is the index of  stability. We shall say that  $\vartheta$ is an $\alpha$-stable (real-valued) random variable. The case $\alpha=2$  and $D_{n}=0$ corresponds to a Gaussian random variable. The L\'evy-Khintchine representation in the case  $\alpha \in (0,2)$ reads as follows:
 $  \E [e^{iu \vartheta}] = e^{\psi(u)}$  where $\psi$ is the characteristic exponent given by
\[\psi(u)=ibu+ \int_{-\infty}^{+\infty} ( e^{iu x}-1-iu x1_{|x|\leq 1})\frac{1}{|x|^{1+\alpha}}(c_{1} 1_{x<0}+ c_{2} 1_{x>0} )dx,\]
with $b\in \mathbb{R} $, $c_{1},c_{2} \geq 0$ and $c_{1}+c_{2}>0$. We say in this case that $\vartheta$ is an $\alpha$-stable random variable with characteristics $(b,c_{1},c_{2})$.  For strictly $\alpha$-stable random variable we have $b=0$ and the case $b=0$, $c_{1}=c_{2}$ corresponds to a symmetric $\alpha$-stable random variable.\\
In the context of the Central Limit Theorem (CLT) it is well known that, for some given sequence of iid random variables $(V_{n})_{n\geq 1}$, the partial sum properly normalized convergence to an $\alpha$-stable random variable, $\alpha \in (0,2]$, if the random variable $V_{1}$ is in a domain of attraction of the $\alpha$-stable random variable. We say that
the random variable $V_{1}$ is in the domain of attraction of an $\alpha$-stable with tails parameters $c_{1}$ and $c_{2}$ if
\begin{equation}
\label{eq:stable}
  \PP[V_{1} >x] = \frac{c_{1} +h(x)}{x^{\alpha}}, \quad \textrm{and} \quad
  \PP[ V_{1} \leq -x] = \frac{c_{2} +h(x)}{x^{\alpha}},  \quad x>0,
  \end{equation}
where $h$ is a function such that $\lim_{x\rightarrow \infty} h(x)=0$. The case of domain of attraction of a symmetric
$\alpha$-stable random variable corresponds to $c_{1}=c_{2}$.
The reader can refer to \cite{gnedenko1968} for the domain of attraction conditions.

We shall also say that the random variable $X$ is in a strong domain of attraction of the $\alpha$-stable if in (\ref{eq:stable}), there exists $\gamma_{\alpha}>0$ such that $h(x) = \mathcal{O}(\frac{1}{|x|^{\gamma_{\alpha}}})$
 at $+\infty$ meaning that $\exists K, x_{0} \in \mathbb{R}_{+}$ and $|h(x)|\leq K |x|^{-\gamma_{\alpha}}$ for all $x\geq x_{0}$ :
 $$  \PP[V_{1}  >x] - \frac{c_{1}}{x^{\alpha}}=\mathcal{O}(\frac{1}{x^{\alpha+\gamma_{\alpha}}}),
  \quad \PP[V_{1}  \leq -x] - \frac{c_{2}}{|x|^{\alpha}} = \mathcal{O}(\frac{1}{|x|^{\alpha+\gamma_{\alpha}}}),  \quad x>0 \, . $$
In the sequel we call $\gamma_{\alpha}$ the attraction index.\\

In this paper, we obtain a rate of convergence to an $\alpha$-stable random variable, $\alpha \in (1,2)$, for partial sums of iid  random variables $(V_{n})_{n\geq 1}$ (the generalized CLT) between characteristic functions.

To this end, we show in Theorem \ref{theo:independance} a bound  of order $n^{\frac{\alpha-r}{\alpha}}$ in the Zolotarev ideal probability metric $\zeta_{r}$ for $r\in ]\alpha,2]$, provided the random variable $V_{1}$ is in a strong domain of attraction of the $\alpha$-stable random variable with an attraction index $\gamma_{\alpha} >  r-\alpha$. We then applied the result to derive the rate of convergence  between characteristic functions, see Theorem \ref{maintheo}. 

The layout of this short paper is as follows. We start in Section \ref{preliminaires}, by the notion of ideal probability metrics and introduce  the Zolotarev ideal probability metric of order $r>0$. We give in Section \ref{main} a rate of convergence  to an $\alpha$-stable random variable, $\alpha \in (1,2)$, for iid random variables  between characteristic functions in  Theorem \ref{maintheo}. 
\section{Preliminaries}
\label{preliminaires}
Denote by $\Sigma $ the space of real random variables. In this section we introduce the notion of ideal probability distance and as an example we introduce the Zolotarev distance.\\
On a probability space $(\Omega,\FF,\PP)$ consider two real random variables $X$ and $Y$, denote by $P_{X}$ and $ P_Y$ their probability laws respectively.
\begin{definition}
 A map $d(.,.)$ defined in the space $\Sigma \times \Sigma \rightarrow [0,\infty]$ is said to
 be a probability distance in  $\Sigma$ if for all random variables  $X$, $Y$ and $Z$ the following statements hold:
\begin{itemize}
\item[(1)] \[\mathbb{P}(X=Y)=1 \Rightarrow d(X,Y)=0, \]
\item[(2)] \[ d(X,Y)= d(Y,X), \]
\item[(3)]  \[ d(X,Y) \leq d(X,Z)+ d(Z,Y). \]
\end{itemize}
\end{definition}

If the values of $d(X,Y)$ are determined by the marginal distributions $P_X$ and $P_Y$ then one says that the distance $d$ is simple.

An example of a  simple distance is:
\begin{itemize}
 \item  the total variation distance defined by:
\[ d_{VT}(X,Y)= \sup_{||f||_{\infty}\leq 1} \vert \E [f(X)]- \E[f(Y)] \vert, \]
where $||f||_{\infty}=\sup_{x\in \mathbb{R}} |f(x)|$.
\end{itemize}
Now let us define an ideal probability distance. 
\begin{definition}
A simple distance $d$ in  $\Sigma$ is called an ideal probability distance of order $r\geq 0$, if the following statements hold:
\begin{itemize}
\item[(4)]
\begin{equation}
\label{eq:regularity}
 d(X+Z, Y+Z) \leq d(X,Y),
 \end{equation}
for $Z$ independent of $X$ and $Y$. (Regularity)
\item[(5)]
\begin{equation}
\label{eq:homogeneity}
 d(cX,cY)=|c|^r d(X,Y)
 \end{equation}
 for any $c \in \mathbb{R}^{*}$. (Homogeneity of order $r$)
\end{itemize}
\end{definition}
We now give as  example  the Zolotarev distance. 
\begin{definition}
Let $r>0$ with the representation $r=m+\beta$ where $\beta \in ]0,1]$ and $m \in \mathbb{N}$.
We define the following simple metric :
 \begin{equation}
\zeta_{r}(X,Y)= \sup_{f\in \Lambda_r} \vert \E [f(X)] - \E [f(Y)] \vert,
\end{equation}
where $\Lambda_r$ is the set of bounded functions $f: \mathbb{R} \rightarrow \mathbb{R}$ which are $m$-times continuously differentiable and such that
\begin{equation}
 |f^{(m)}(x)- f^{(m)}(y)| \leq |x-y|^{\beta}, \quad \beta+m=r,
 \end{equation}
where  $f^{(m)}$ is the derivate function of order $m$.\\
\end{definition}
The metric $\zeta_{r}$ is called the Zolotarev probability distance and it is an ideal probability metric of order $r$.
We refer the reader to Theorem 1.4.2 in \cite{zolotarev2} for the proof.\\
Note that by a simple application of the Taylor formula with integral remainder, one can show that
$\zeta_{r}(X,Y)<\infty$ provided
  \begin{equation}
    \label{eq:moments}
    \E[|X|^{r}], \, \E[|Y|^{r}] < \infty
    \end{equation}
  and
 \begin{equation}
 \label{eq:mixmoment}
  \E [X^{k}] = \E [Y^{k}] \quad 0\leq k \leq m.
 \end{equation}
Note also that the condition in (\ref{eq:mixmoment}) is necessarily but the moment condition in (\ref{eq:moments}) can be relaxed using domain of attraction conditions. 

Some interesting cases are $r=1$ and $r=2$.
\begin{itemize}
\item[(a)] The Zolotarev metric of order $1$ is defined by:
\[ \zeta_{1}(X,Y) =\sup_{f\in \Lambda_1} \left| \E [f(X)] - \E [f(Y)] \right|,  \]
 where $\Lambda_{1}$ is the set of bounded continuous functions $f : \mathbb{R} \rightarrow \mathbb{R}$
 such that  $ |f(x)-f(y)| \leq |x-y|$. \\
  By the famous Kantorovich-Rubinstein duality, it rewrites as :
 \[ \mathbb{W}(X,Y)=\inf_{(P_{X},P_{Y})} \E[\left|X-Y\right|],  \]
where the infimum runs over all coupling of the marginal distributions $P_{X}$ and $P_{Y}$. 
\item[(b)] The Zolotarev metric of order $2$ is defined by 
\[ \zeta_{2}(X,Y) =\sup_{f \in \Lambda_2} \vert \E [f(X)] - \E [f(Y)] \vert,  \]
where $\Lambda_{2}$ is the set of bounded functions $f: \mathbb{R} \rightarrow \mathbb{R}$ which are $1$-times continuously differentiable and such that $|f'(x)-f'(y)| \leq |x-y|$.
\end{itemize}

\section[Rates of convergence] {Main results}
\label{main}
\begin{lemma}
\[
|x-y|\max(|x|^{r-1},|y|^{r-1}) \leq 2\left| x|x|^{r-1}-y|y|^{r-1} \right|,\quad \forall x,y \in \mathbb{R},\quad r\geq 1. \]
\end{lemma}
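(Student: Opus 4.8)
The plan is to introduce the odd, nondecreasing function $\phi(t) = t|t|^{r-1}$ on $\R$, so that the right-hand side equals $2|\phi(x) - \phi(y)|$, and to exploit two symmetries of the claimed inequality. Both sides are unchanged when $x$ and $y$ are swapped (each of $|x-y|$, $\max(|x|^{r-1},|y|^{r-1})$ and $|\phi(x)-\phi(y)|$ is symmetric), and both sides are unchanged under the reflection $(x,y)\mapsto(-x,-y)$, since $\phi$ is odd. Combining these, I may assume without loss of generality that $x \ge |y| \ge 0$; in particular $x \ge 0$ and $\max(|x|^{r-1},|y|^{r-1}) = x^{r-1}$. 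The degenerate case $x = 0$ forces $y = 0$ and the inequality is trivial, so I take $x > 0$.

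Next I would split according to the sign of $y$. If $y \ge 0$ then $x \ge y \ge 0$ and $\phi(x) - \phi(y) = x^r - y^r \ge 0$, so after dropping the harmless factor $2$ it suffices to check $(x-y)x^{r-1} \le x^r - y^r$. This rearranges to $x^r - y^r - (x-y)x^{r-1} = y\left(x^{r-1}-y^{r-1}\right) \ge 0$, which holds because $x \ge y \ge 0$ and $t \mapsto t^{r-1}$ is nondecreasing for $r \ge 1$. If instead $y < 0$, I write $z = -y$, so $0 < z \le x$ and $\phi(x) - \phi(y) = x^r + z^r$, and the claim becomes $(x+z)x^{r-1} \le 2(x^r+z^r)$. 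This reduces to $2(x^r+z^r) - (x+z)x^{r-1} = x^{r-1}(x-z) + 2z^r \ge 0$, which is immediate since $x \ge z > 0$.

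Each of the two displayed rearrangements factors into a sum or product of manifestly nonnegative quantities, so the computational content is light. I expect the only real care to lie in justifying the two symmetry reductions cleanly and in locating where the constant $2$ becomes essential. In the same-sign case the sharper inequality with constant $1$ already holds; the factor $2$ is forced only in the opposite-sign case, where the quantity $|\phi(x)-\phi(y)| = \int_{y}^{x} r|t|^{r-1}\,dt$ is taken across an interval straddling the origin, on which the integrand $r|t|^{r-1}$ nearly vanishes. Choosing $0 < z < x$ close to $x$ together with a large exponent $r$ drives the ratio of the two sides towards $2$, which shows that the constant cannot be lowered and that the stated bound is tight.
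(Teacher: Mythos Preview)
Your argument is correct. Both proofs begin with the same symmetry reduction to $|x|\ge|y|$, but from there you and the paper diverge. The paper avoids any sign split: it applies the reverse triangle inequality to $a=x|x|^{r-1}$ and $b=y|y|^{r-1}$ to obtain $\bigl||x|^{r}-|y|^{r}\bigr|\le\bigl|x|x|^{r-1}-y|y|^{r-1}\bigr|$, and then shows in one stroke that
\[
|x-y|\,|x|^{r-1}\le \bigl|x|x|^{r-1}-y|y|^{r-1}\bigr|+\bigl||x|^{r}-|y|^{r}\bigr|,
\]
combining the two to pick up the factor $2$. You instead split on the sign of $y$ and factor each resulting expression directly. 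Your route is slightly longer but more transparent: it makes visible that the constant $1$ already suffices when $x$ and $y$ share a sign, and that the factor $2$ is genuinely needed only in the opposite-sign case. Your closing remark on sharpness (taking $y$ close to $-x$ with $r$ large) is a nice addition not present in the paper.
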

\begin{proof}
If we apply the following triangle inequality
\[ \left| |a|-|b| \right| \leq |a-b|, \]
for  $a=x|x|^{r-1}$  and  $b=x|x|^{r-1}$, then we have:
\begin{equation*}
\left| |x|^{r}-|y|^{r} \right| \leq \left| x|x|^{r-1}-y|y|^{r-1} \right|,
\end{equation*}
so that
\begin{equation}
\label{eq:inegalite1}
 \left| x|x|^{r-1}-y|y|^{r-1} \right| + \left| |x|^{r}-|y|^{r} \right| \leq 2\left| x|x|^{r-1}-y|y|^{r-1} \right|.
\end{equation}
Now  assume that $|x|\geq |y|$. We have
 \begin{eqnarray*}
      |x-y||x|^{r-1} &=& \vert  x|x|^{r-1}-y|x|^{r-1}  \vert \\
      &=& \vert x|x|^{r-1}-y|y|^{r-1}+y|y|^{r-1}-y|x|^{r-1} \vert \\
      &\leq & \left| x|x|^{r-1}-y|y|^{r-1} \right| + |y|(|x|^{r-1}-|y|^{r-1}) \\
      &=&  \left| x|x|^{r-1}-y|y|^{r-1} \right| + |y| |x|^{r-1}-|y|^{r}\\
      &\leq & \left| x|x|^{r-1}-y|y|^{r-1} \right| + |x|^{r}-|y|^{r}.
      \end{eqnarray*}
This leads to the following fact:
\begin{equation}
\label{eq:inegalite2}
 |x-y|\max(|x|^{r-1},|y|^{r-1}) \leq \left| x|x|^{r-1}-y|y|^{r-1} \right| + \left| |x|^{r}-|y|^{r} \right|.
 \end{equation}
Bringing together the inequalities (\ref{eq:inegalite1}) and (\ref{eq:inegalite2}), we thus obtain
the desired inequality.\\
\end{proof}

\label{zolotarev}
\begin{theorem}
\label{theo:independance}
Given a sequence of integrable iid random variables $(V_n)_{n\geq 1}$, set
$$\tilde{S}_{n}:= n^{-1/\alpha}\sum_{k=1}^{n} (V_{k}-\E [V_{k}]).       $$
Assume that $V_{1}$ is in the strong domain of attraction of a symmetric $\alpha$-stable random variable $\vartheta$ with $\alpha \in (1,2)$ and an attraction index $\gamma_{\alpha}>r-\alpha$ where $r\in (\alpha,2]$.  Then there exists a constant $C>0$ such that :
\[ \zeta_{r}(\tilde{S}_{n}, \vartheta ) \leq C\, n^{\frac{\alpha-r}{\alpha}}.\]
\end{theorem}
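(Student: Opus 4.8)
The plan is to exploit the two structural properties of the ideal metric $\zeta_r$, namely homogeneity of order $r$ and regularity, in order to reduce the global distance $\zeta_r(\tilde S_n,\vartheta)$ to a single-summand comparison, and then to establish that this single-summand quantity is finite by invoking the strong domain of attraction hypothesis. Throughout write $W_k:=V_k-\E[V_k]$, so that the $W_k$ are i.i.d., centered, and $\tilde S_n=n^{-1/\alpha}\sum_{k=1}^n W_k$.

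First I would use that $\vartheta$ is symmetric $\alpha$-stable, hence strictly stable with $D_n=0$, to represent it as $\vartheta\overset{d}{=}n^{-1/\alpha}(\vartheta_1+\cdots+\vartheta_n)$ with $\vartheta_1,\dots,\vartheta_n$ i.i.d.\ copies of $\vartheta$, taken independent of the $W_k$. Applying the homogeneity property (\ref{eq:homogeneity}) of order $r$ then gives $\zeta_r(\tilde S_n,\vartheta)=n^{-r/\alpha}\,\zeta_r\big(\sum_{k=1}^n W_k,\sum_{k=1}^n\vartheta_k\big)$. Next I would run the standard telescoping argument: interpolate between the two sums one coordinate at a time, so that at each step the two compared variables differ only by replacing a single $W_j$ with the corresponding $\vartheta_j$, the remaining summands forming a common independent block. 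The regularity property (\ref{eq:regularity}) bounds each such step by $\zeta_r(W_j,\vartheta_j)$, and the triangle inequality yields $\zeta_r\big(\sum_k W_k,\sum_k\vartheta_k\big)\le\sum_{k=1}^n\zeta_r(W_k,\vartheta_k)=n\,\zeta_r(W_1,\vartheta)$. Combining the two displays gives exactly $\zeta_r(\tilde S_n,\vartheta)\le n^{(\alpha-r)/\alpha}\,\zeta_r(W_1,\vartheta)$, so the announced bound follows with $C=\zeta_r(W_1,\vartheta)$, provided this constant is finite.

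The only real obstacle, and the sole place where the attraction index intervenes, is the finiteness $\zeta_r(W_1,\vartheta)<\infty$. This is delicate because for $r>\alpha$ both $\E[|W_1|^r]$ and $\E[|\vartheta|^r]$ are infinite, so the crude criterion (\ref{eq:moments}) is unavailable and one must exploit cancellation of the heavy tails. For $f\in\Lambda_r$ (here $m=1$, $\beta=r-1$) put $g(x)=f(x)-f(0)-f'(0)x$; the matching $\E[W_1]=0=\E[\vartheta]$ (the latter by symmetry and $\alpha>1$) allows me to replace $f$ by $g$ in the difference $\E[f(W_1)]-\E[f(\vartheta)]$, while Taylor's formula gives the $f$-independent majorants $|g(x)|\le|x|^r/r$ and $|g'(x)|\le|x|^{r-1}$ valid over the whole class $\Lambda_r$. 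I would then write $\E[g(W_1)]-\E[g(\vartheta)]$ as a Stieltjes integral against the difference of the two laws and integrate by parts, reducing the estimate to controlling $\int_{x_0}^\infty|g'(x)|\,\big|\PP[\vartheta>x]-\PP[W_1>x]\big|\,dx$, the symmetric contribution from the negative half-line, and the boundary term $g(x)\big(\PP[\vartheta>x]-\PP[W_1>x]\big)$ at infinity.

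Here the strong domain of attraction hypothesis is decisive. Since $\vartheta$ is precisely the stable law attracting $V_1$, the leading $x^{-\alpha}$ terms of the tails of $W_1$ and $\vartheta$ cancel, and the strong attraction condition forces $\big|\PP[\vartheta>x]-\PP[W_1>x]\big|=\mathcal{O}(x^{-\alpha-\gamma_\alpha})$; the centering shift by $\E[V_1]$ only adds a term of order $x^{-\alpha-1}$, which is harmless because $r-\alpha<1$. With $|g'(x)|\le x^{r-1}$ the integrand is $\mathcal{O}(x^{r-1-\alpha-\gamma_\alpha})$, and both this integral and the boundary term (majorized by $x^{r-\alpha-\gamma_\alpha}$) converge, respectively vanish, exactly when $r-\alpha-\gamma_\alpha<0$, i.e.\ under the standing hypothesis $\gamma_\alpha>r-\alpha$. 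Because every estimate depends on $f$ only through the $f$-free majorants of $g$ and $g'$, the bound is uniform over $\Lambda_r$, giving $\zeta_r(W_1,\vartheta)<\infty$ and closing the argument. The single genuinely technical point I expect to dwell on is this tail-integration step, in particular the treatment on the bounded range $x\le x_0$, where only boundedness of the distribution functions and of $g'$ is available.
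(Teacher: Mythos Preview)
Your reduction step---representing $\vartheta$ as a self-normalized sum of i.i.d.\ copies, then telescoping via regularity and homogeneity to obtain $\zeta_r(\tilde S_n,\vartheta)\le n^{(\alpha-r)/\alpha}\zeta_r(W_1,\vartheta)$---is exactly what the paper does. The two proofs diverge only in how they establish $\zeta_r(W_1,\vartheta)<\infty$. The paper introduces an auxiliary metric $\kappa_r(X,Y)=\inf\E\big[\big|X|X|^{r-1}-Y|Y|^{r-1}\big|\big]$, proves the comparison $\zeta_r\le 2\kappa_r$ via a mean-value argument combined with the elementary inequality $|x-y|\max(|x|^{r-1},|y|^{r-1})\le 2\big|x|x|^{r-1}-y|y|^{r-1}\big|$ (its Lemma~3.1), and then invokes the known integral representation $\kappa_r(X,Y)=r\int|u|^{r-1}|F_X(u)-F_Y(u)|\,du$. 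Your route bypasses $\kappa_r$ and the lemma entirely: you subtract the affine part of $f$, use the $f$-free bounds $|g'(x)|\le|x|^{r-1}$ and $|g(x)|\le|x|^r/r$, and integrate by parts directly. Both approaches land on the same convergence criterion $\int_1^\infty u^{r-1-\alpha-\gamma_\alpha}\,du<\infty$, i.e.\ $\gamma_\alpha>r-\alpha$. Your argument is more self-contained (no external citations for $\kappa_r$), while the paper's detour through $\kappa_r$ packages the tail comparison into a single clean inequality $\zeta_r\le 2\kappa_r$; functionally they are equivalent. Your handling of the boundary term, the bounded range $[0,x_0]$, and the centering shift is correct, since for each fixed $f\in\Lambda_r$ the expectations $\E[g(W_1)]$ and $\E[g(\vartheta)]$ are finite (as $g$ grows at most linearly and $\alpha>1$), so the integration by parts is justified and the resulting bound is uniform in $f$ through the $f$-independent majorants.
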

One can deduce the rate of convergence between characteristic functions.
\begin{theorem}
\label{maintheo}
Given a sequence of integrable iid random variables $(V_n)_{n\geq 1}$, set
$$\tilde{S}_{n}:= n^{-1/\alpha}\sum_{k=1}^{n} (V_{k}-\E [V_{k}]).       $$
Assume that $V_{1}$ is in the strong domain of attraction of a symmetric $\alpha$-stable random variable $\vartheta$ with $\alpha \in (1,2)$ and an attraction index $\gamma_{\alpha}>2-\alpha$.  Then there exists a constant $C(t)>0$
 such that :
\[  \chi_{t}(\tilde{S}_{n}, \vartheta ) = \vert \E e^{it \tilde{S}_{n}} - \E e^{it \vartheta} \vert  \leq C(t)\, n^{\frac{\alpha-2}{\alpha}}.
\]
\end{theorem}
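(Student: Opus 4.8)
The plan is to deduce this estimate directly from Theorem \ref{theo:independance}. Observe that the hypothesis $\gamma_{\alpha} > 2 - \alpha$ is precisely the condition $\gamma_{\alpha} > r - \alpha$ of Theorem \ref{theo:independance} evaluated at the endpoint $r = 2 \in (\alpha, 2]$. Hence that theorem applies with $r = 2$ and yields a constant $C > 0$ such that
\[ \zeta_{2}(\tilde{S}_{n}, \vartheta) \leq C\, n^{\frac{\alpha - 2}{\alpha}}. \]
It therefore suffices to control the characteristic function distance $\chi_{t}$ by the Zolotarev metric $\zeta_{2}$ uniformly in $n$, which turns the whole problem into a comparison of metrics.

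The key step is to show that, for each fixed $t$, the two real test functions underlying $\chi_{t}$ belong, up to a multiplicative constant, to the class $\Lambda_{2}$ defining $\zeta_{2}$. Writing
\[ \E e^{it\tilde{S}_{n}} - \E e^{it\vartheta} = \big(\E \cos(t\tilde{S}_{n}) - \E \cos(t\vartheta)\big) + i\big(\E \sin(t\tilde{S}_{n}) - \E \sin(t\vartheta)\big), \]
I would examine $f(x) = \cos(tx)$ and $g(x) = \sin(tx)$. Both are bounded and of class $C^{1}$, with $f'(x) = -t\sin(tx)$ and $g'(x) = t\cos(tx)$; using the elementary inequality $|\sin a - \sin b| \leq |a - b|$ (and its analogue for the cosine) one gets $|f'(x) - f'(y)| \leq t^{2}|x - y|$, and likewise for $g$. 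Consequently $t^{-2} f$ and $t^{-2} g$ lie in $\Lambda_{2}$, so by the very definition of $\zeta_{2}$,
\[ |\E \cos(t\tilde{S}_{n}) - \E \cos(t\vartheta)| \leq t^{2}\,\zeta_{2}(\tilde{S}_{n}, \vartheta), \]
and the same bound holds for the sine part.

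Combining the two estimates through the triangle inequality gives $\chi_{t}(\tilde{S}_{n}, \vartheta) \leq 2t^{2}\,\zeta_{2}(\tilde{S}_{n}, \vartheta)$, and substituting the bound from Theorem \ref{theo:independance} produces the claim with $C(t) = 2C t^{2}$. The main point to get right --- indeed essentially the only obstacle --- is the verification that the scaled trigonometric functions satisfy the regularity demanded by $\Lambda_{2}$ together with the correct bookkeeping of the factor $t^{2}$; the moment-matching requirement (\ref{eq:mixmoment}) that guarantees finiteness of $\zeta_{2}$ is automatic here, since $\tilde{S}_{n}$ is centred by construction and the symmetric $\alpha$-stable limit $\vartheta$ has zero mean for $\alpha > 1$, so no work is needed beyond invoking the previous theorem.
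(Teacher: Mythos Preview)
Your proof is correct and follows essentially the same route as the paper: bound $\chi_{t}$ by $\zeta_{2}$ using that $x\mapsto e^{itx}$ has a $t^{2}$-Lipschitz derivative, then invoke Theorem~\ref{theo:independance} with $r=2$. The only cosmetic difference is that the paper treats the complex exponential as a single test function and writes $\chi_{t}\leq t^{2}\zeta_{2}$, whereas you split into real and imaginary parts (arguably more faithful to the definition of $\Lambda_{2}$ as a class of real-valued functions) and pick up an innocuous extra factor of $2$.
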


\begin{proof}
Set
 \[ \chi_{t}(\tilde{S}_{n}, \vartheta )= \left| \E [e^{it\tilde{S}_{n}}]- \E [e^{it\vartheta}] \right|, \quad t\in \mathbb{R},\]
and observe that, using the definition of the ideal probability distance $\zeta_{2}$, we have
 \[ \chi_{t}(\tilde{S}_{n}, \vartheta ) \leq t^{2} \zeta_{2}(\tilde{S}_{n}, \vartheta ) ,  \]
since the function $f_{t}(x)=e^{itx}, \; t\in \mathbb{R}, \; x\in \mathbb{R}$ is bounded and the derivative $f'_{t}$ is a $t^{2}$-Lipschitz function. \\
We thus obtain the result by applying Theorem 3.1 with $r=2$. 

\end{proof}

Now let us start the proof of  Theorem 3.1

\begin{proofth1}
Without loss of generality, we assume that the sequence $(V_n)_{n\geq 1}$ is centered.
Consider a sequence $(\vartheta_n)_{n\geq 1}$ of i.i.d copies of $\vartheta$. We have the following identity:
\begin{equation*}
\vartheta \overset{\mathcal{L}}{=} \frac{\vartheta_1 + \ldots+ \vartheta_n }{n^{1/\alpha}}, \quad \forall n\geq 1.
\end{equation*}
We choose the sequence $(\vartheta_n)_{n\geq 1}$ to be independent of the sequence $(V_{n})_{n\geq 1}$.
Since $\zeta_{r}$ is a simple distance, we have
\[  \zeta_{r}(n^{-1/\alpha}\sum_{k=1}^{n} V_{k}, \vartheta)= \zeta_{r}\left(n^{-1/\alpha}\sum_{k=1}^{n} V_{k}, n^{-1/\alpha}\sum_{k=1}^{n}\vartheta_{k} \right).      \]
For $n=2$, since $\zeta_{r}$ is an ideal probability metric of order $r$ we have:

\begin{eqnarray*}
\zeta_{r} \left(2^{-1/\alpha}(V_{1}+ V_{2}), 2^{-1/\alpha}(\vartheta_{1}+\vartheta_{2})\right) &\leq &
\zeta_{r}\left(2^{-1/\alpha}(V_{1}+  V_{2}), 2^{-1/\alpha}(\vartheta_{2}+ V_{1})\right)\\
 && + \zeta_{r}\left( 2^{-1/\alpha}(\vartheta_{2}+ V_{1}), 2^{-1/\alpha}(\vartheta_{1}+  \vartheta_{2})\right)\\
  &\leq & 2^{-\frac{r}{\alpha}}\left(\zeta_{r}(V_{1}, \vartheta_{1})+ \zeta_{r}(V_{2}, \vartheta_{2}) \right)\\
  &=& 2^{1-\frac{r}{\alpha}}\, \zeta_{r}(V_{1}, \vartheta_{1})= 2^{1-\frac{r}{\alpha}}\,\zeta_{r}(V_{1}, \vartheta),
\end{eqnarray*}
where we use in the first inequality the regularity property and in the second inequality the homogeneity of order $r$ of the probability metric $\zeta_{r}$; the last equality holds since $\zeta_{r}$ is a simple distance.\\
Thus, by induction on $n$, we have
\begin{eqnarray*}
\label{eq:star1}
\zeta_{r}\left(n^{-1/\alpha}\sum_{k=1}^{n} V_k, n^{-1/\alpha}\sum_{k=1}^{n}\vartheta_{k}\right) &\leq & n^{-\frac{r}{\alpha}} \sum_{k=1}^{n} \zeta_{r}(V_{k}, \vartheta_{k}),
\end{eqnarray*}
so that
\begin{eqnarray*}
\zeta_{r}\left(n^{-1/\alpha}\sum_{k=1}^{n} V_{k}, n^{-1/\alpha}\sum_{k=1}^{n}\vartheta_{k}\right)
&\leq & n^{1-\frac{r}{\alpha}}\zeta_{r}(V_{1}, \vartheta_{1})\\
&= &  n^{1-\frac{r}{\alpha}} \zeta_{r}(V_{1},\vartheta)\\
&=& C\,n^{1-\frac{r}{\alpha}},
\end{eqnarray*}
provided $C:= \zeta_{r}(V_{1}, \vartheta)$ is finite.\\
Now the remainder of the proof is devoted to show that $C$ is finite. \\

Consider two real random variables $X$, $Y$ and define 
\[ \kappa_{r}(X,Y)=\sup_{f\in \Gamma} \vert \E[f(X)]-\E[f(Y)] \vert, \]
where $\Gamma$ is the set of bounded functions $f: \mathbb{R} \rightarrow \mathbb{R}$ such that 
\[ |f(x)-f(y)| \leq \left|x|x|^{r-1}-y|y|^{r-1}\right|.   \]
By  \cite{rachev1990}, $\kappa_{r}$ rewrites as 
    \[  \kappa_{r}(X,Y)=  \inf_{(P_{X},P_{Y})}
 \left( \E\left[\left| X|X|^{r-1}-Y|Y|^{r-1}\right| \right] \right), \]
where the infimum runs over all coupling of the  marginal distributions $P_{X}$ and $P_{Y}$.\\

Let $V_1$ and $\vartheta$ be the optimal coupling for $ \kappa_{r}(V_{1},\vartheta)$ and $f: \mathbb{R} \rightarrow \mathbb{R}$, bounded and $1$-time continuously differentiable such that: 
\[ |f'(x)- f'(y)| \leq |x-y|^{r-1}. \]
Letting $Z=V_{1}-\vartheta$, we have
\[ f(V_{1})-f(\vartheta)= f(\vartheta+Z)-f(\vartheta).  \]
By the mean value theorem there exists  $ \lambda \in ]0,1]$ such that:
\[   f(\vartheta+Z)-f(\vartheta)  = f'(\vartheta+\lambda Z)Z .          \]
Since $\E [V_{1}] = \E [\vartheta] = 0$ we have,
\[ \E [f(\vartheta+Z)-f(\vartheta)]  = \E [f'(\vartheta +\lambda Z)Z] - \E[f'(0)Z] \]
and
 \begin{eqnarray*}
  \vert \E [f(\vartheta+Z)]- \E[f(\vartheta)] \vert   &\leq&  \E \left[\vert f'(\vartheta+\lambda Z)Z - f'(0)Z \vert\right]  \\
   &\leq &  \E \left[ \vert \vartheta +\lambda Z\vert^{r-1} |Z|\right]\\
  &= & \E \left[ \vert (1-\lambda) \vartheta +\lambda V_{1} \vert^{r-1} |V_{1}-\vartheta|\right].
 \end{eqnarray*}
  Since 
 $$ (1-\lambda)|x| + \lambda|y| \leq \max(|x|,|y|), \quad \lambda \in ]0,1],          $$ 
 we have,
   \begin{equation*}
   \label{eq:maxbis} 
  \vert \E [f(\vartheta+Z)]- \E[f(\vartheta)] \vert  \leq \E\left[ |V_{1}-\vartheta|\max(|V_{1}|^{r-1},|\vartheta|^{r-1})\right]. 
  \end{equation*}
Now taking into account Lemma 3.1 we obtain
\[ \vert \E [f(\vartheta+Z)]- \E[f(\vartheta)] \vert \leq 2\,\E\left[\left|V_{1}|V_{1}|^{r-1}-\vartheta|\vartheta|^{r-1}\right|\right], \]
and thus 
 \begin{eqnarray*}
\zeta_{r}(V_{1},\vartheta) &\leq & 2\, \E\left[ \left|V_{1}|V_{1}|^{r-1}-\vartheta|\vartheta|
^{r-1}\right| \right] \\
& = & 2 \, \kappa_{r}(V_{1},\vartheta).
 \end{eqnarray*}
By \cite{zolotarev2}, the following representation hold:
\[  \kappa_{r}(V_{1},\vartheta) \, = \,  r\, \int_{\mathbb{R}} |u|^{r-1} |F_{V_{1}}(u)-F_{\vartheta}(u)|du, \]
and thus 
 \begin{eqnarray*}
 \kappa_{r}(V_{1},\vartheta)&=& r \int_{0}^{+\infty} |u|^{r-1} \left| \PP[V_{1}>u]-\PP[\vartheta>u] \right| du\\
 &+& r \int_{0}^{+\infty}|u|^{r-1} \left| \PP[V_{1}<-u]-\PP[\vartheta<-u] \right| du.
\end{eqnarray*}
Now recall that for any $\alpha$-stable random variable $\Theta$ with local characteristic
$(c_{1},c_{2})$, we have the following expansion, cf. \cite{mijn}, when $u \rightarrow +\infty$:
\begin{equation*}
\label{eq:asymptotic}
\PP[\Theta>u]\,= \, \frac{c_1}{u^{\alpha}} + \frac{c_2}{u^{2\alpha}} + \mathcal{O}(\frac{1}{u^{3\alpha}}), \quad
  \PP[\Theta<-u]\,= \, \frac{c_2}{u^{\alpha}} + \frac{c_1}{u^{2\alpha}} + \mathcal{O}(\frac{1}{u^{3\alpha}}).
\end{equation*}
This means that every stable random variable is in its own strong domain of attraction : 
\[  \left| \PP[V_{1}>u]-\PP[\vartheta>u]\right| \; \leq \; \left|\PP[V_{1}>u]-c\,u^{-\alpha}\right|  + \mathcal{O}(u^{-2\alpha}), \]
and \[ \left| \PP[V_{1}<-u]-\PP[\vartheta<-u] \right|\; \leq \; \left|\PP[V_{1}<-u]-c\,u^{-\alpha}\right| + \mathcal{O}(u^{-2\alpha}),\]
where $c$ is the local characteristic of the symmetric $\alpha$-stable random variable $\vartheta$ with $\alpha\in (1,2)$.\\
 Therefore we get \[\kappa_{r}(V_{1},\vartheta) < \infty \quad  \Longleftrightarrow \quad
\int_{1}^{\infty} |u|^{r-1-\alpha-\gamma_{\alpha}}\,du < \infty \quad \Longleftrightarrow \quad
\gamma_{\alpha}>r-\alpha,\]
which corresponds to our assumptions. Since 
\[ \zeta_{r}(V_{1},\vartheta) \,\leq\, 2\, \kappa_{r}(V_{1},\vartheta),  \]
we then have : \[ \zeta_{r}(\tilde{S}_{n}, \vartheta ) \leq C\, n^{\frac{\alpha-r}{\alpha}}.\]

\end{proofth1}

\end{document}